\documentclass{amsart}
%\synctex=1

\usepackage{amsmath,amssymb,enumerate}
\usepackage{mathtools}
\usepackage{soul}

\usepackage{epsfig}
\usepackage[dvipsnames]{xcolor}

%\swapnumbers
%\theoremstyle{plain}
\newtheorem{theorem}{Theorem}[section]

\newtheorem{lemma}[theorem]{Lemma}

\newtheorem{definition}[theorem]{Definition}

\newtheorem{question}[theorem]{Question}

%\renewcommand{\thetheorem}{(\arabic{section}.\arabic{theorem})}
%\makeatletter
%\def\tagform@#1{\maketag@@@{\ignorespaces#1\unskip\@@italiccorr}}
%\makeatother

%\numberwithin{proposition}{section}
%\numberwithin{equation}{section}

%\newcommand{\question}[1]{\medskip\noindent\new{Question.} #1\medskip}

\newcommand{\occult}[1]{}

\newcommand\ko[1]{\marginpar{\tiny \ko{#1}}}

\usepackage{soul}

\newcommand{\Reg}{\mathcal{R}}     %{\mathrm{Reg}}
\newcommand{\Sing}{\mathcal{S}}     %{\mathrm{Sing}}

\begin{document}

\title[Phase Transitions]{Phase transitions for the geodesic flow of a rank one surface with nonpositive curvature}

\begin{abstract} 
 We study the one parameter family of potential functions $q\varphi^u$ associated with the geometric potential $\varphi^u$ for the geodesic flow of a compact rank 1 surface of nonpositive curvature.  For $q<1$ it is known that there is a unique equilibrium state associated with $q\varphi^u$, and it has full support. For $q > 1$ it is known that an invariant measure is an equilibrium state if and only if it is supported on the singular set. We study the critical value $q=1$ and show that the ergodic equilibrium states are either the restriction to the regular set of the Liouville measure, or measures supported on the singular set. In particular, when~$q = 1$, there is a unique ergodic equilibrium state that gives positive measure to the regular set.
\end{abstract}

\author{K.~Burns, J.~Buzzi, T.~Fisher, and N.~Sawyer}

\address{K.~Burns, Department of Mathematics, Northwestern University, Evanston, IL 60208, USA, \emph{E-mail address:} \tt{burns@math.northwestern.edu}}
\address{J.~Buzzi, Laboratoire de Mathématiques d'Orsay, CNRS - UMR 8628
Universit\'e Paris-Sud 11, 91405 Orsay, France, \emph{E-mail address:}
\tt{jerome.buzzi@math.u-psud.fr}}
\address{T.~Fisher, Department of Mathematics, Brigham Young University, Provo, UT 84602, USA, \emph{E-mail address:} \tt{tfisher@mathematics.byu.edu}}
\address{N.~Sawyer, Department of Mathematics and Computer Science, Southwestern University, Georgetown, TX 78626, USA, \emph{E-mail address:} \tt{nsawyer@southwestern.edu}}

\thanks{T.F.\ is supported by Simons Foundation Grant \# 239708.  }
\thanks{J.B.\ is supported by the ANR grant ISDEEC \# ANR-16-CE40-0013.  }

\subjclass[2010]{37B40, 37C40, 37D30}
\keywords{Dynamical systems; equilibrium states; geodesic flow}

\maketitle

\newcommand\blue{\color{blue}}

\section{Introduction}

{
%\marginpar{\tiny informal explanation of the setting, result and proof of this paper}
Thermodynamical formalism brings ideas from statistical mechanics into ergodic theory  \cite{RuelleBook, Sinai-Gibbs-Ergodic}, and builds  natural measures such as the Sina\u\i-Ruelle-Bowen measures or the maximal entropy measures as \emph{equilibrium states} with respect to appropriate \emph{potential functions}. In the classical setting of uniformly hyperbolic dynamics (e.g., geodesic flows on closed manifolds of negative curvature) every H\"older continuous potential function defines a unique equilibrium measure. This equilibrium state can be characterized in several ways. It is the limit of uniform measures on the periodic orbits with weights determined by the potential. Also the integral of a continuous function with respect to the equilibrium state is the forward Birkhoff average of the function at a Lebesgue typical point.

%is the unique measure such that describe Lebesgue almost every orbits or the distribution of periodic orbits \cite{rB75}. 

%\marginpar{\tiny nonuniformly hyperbolic dynamics, rank one, phase transitions}
Since the 1980s, this formalism has been studied in various nonuniformly hyperbolic settings. The geodesic flows for  closed \emph{rank one manifolds} with nonpositive curvature \cite{Ballmann82} have been key examples. New phenomena appear such as \emph{phase transitions}: for special potential functions, the uniqueness of the equilibrium state fails. A physical analogue is the coexistence of ice and liquid water at a precise temperature for a given pressure \cite{physics-book,Sarig-Critical}.

%\marginpar{\tiny our result and proof}
In this paper we consider the geodesic flow for a closed rank one surface with nonempty singular set and the family of potentials $q\varphi^u$, where $\varphi^u$ is the  \emph{unstable Jacobian potential}, which we define below, and $q \in \mathbb R$. It is known (see for example the introduction to \cite{BG}) that this family has a phase transition at $q = 1$. Using the Ledrappier-Young entropy theory, we show that at the phase transition, when $q = 1$, the only equilibrium state not carried by the singular subset is the regular Liouville measure restricted to the regular set.

\section{Precise statement and previous results} 
    
{
We consider the dynamical systems defined by} a flow $\mathcal F =(f^t)_{t\in\mathbb{R}}$ on a manifold  and  a potential $\varphi$, which is a continuous real valued function on the manifold. The topological pressure $P(\varphi)$ is defined as
  $$
  P(\varphi)=\sup_{\mu}h_\mu(\mathcal{F}) + \int \varphi \,d\mu,
  $$ 
  where the supremum is over all $\mathcal{F}$-invariant Borel probability measures.  An \emph{equilibrium state} is a measure that achieves the supremum.  By a theorem of Newhouse \cite{Newhouse1989} (see \cite{Buzzi1997} for a simplified proof) we know that equilibrium states always exist if  $\mathcal{F}$ is $C^\infty$. We will consider equilibrium states for the one parameter family of potentials $q\varphi$ for $q \in \mathbb R$.

This paper studies a specific class of geometric examples. The flow $\mathcal{F}$ is the geodesic flow on the unit tangent bundle $T^1M$ of a
 compact connected $C^\infty$ Riemannian surface $M = (M,g)$. 
We assume that the Gaussian curvature $K$ of the metric $g$ is  everywhere nonpositive and somewhere negative. Such surfaces are known as rank one surfaces of nonpositive curvature; this is the two dimensional case of  the  notion of a rank one manifold of nonpositive curvature introduced in \cite{BBE}.
As a consequence of the Gauss-Bonnet theorem,  $M$ has genus at least $2$. 

In this setting, there are two continuous invariant subbundles $E^s$ and $E^u$ of $TT^1M$. Each of these is one dimensional and  orthogonal to the flow direction $E^c$ in the natural Sasaki metric.  As we explain in the next section, $E^s$ corresponds to the orthogonal Jacobi fields whose length is nonincreasing for all time and  $E^u$  to the orthogonal Jacobi fields whose length is nondecreasing for all time. A vector $v \in T^1M$ is said to be \emph{regular} if $E^s(v) \cap E^u(v)  = \{0\}$. Otherwise $v$ is said to be \emph{singular}. If $v$ is singular, $E^s(v) = E^u(v)$ and there are nonzero covariantly constant orthogonal Jacobi fields along the geodesic $\gamma_v$ that has initial tangent vector $v$. It is easily seen that $v$ is singular if and only if $K(\gamma_v(t)) = 0$ for all $t$.
We denote by $\Reg$ the set of all regular vectors and by $\Sing$ the set of all singular vectors.  The set $\Reg$ is obviously open and invariant while $\Sing$ is closed and invariant.
%\tf{Do we prefer $\mathcal{R}$ or $\mathrm{Reg}$ for the regular set?  Similarly for the singular set?}

There is a natural smooth measure on $T^1M$ known as the Liouville measure. It is invariant under the geodesic flow. The set $\Reg$ was shown to be dense by Ballmann \cite{Ballmann82}. 
It has full Liouville measure in all known examples, but whether this is always true {is a major open question. This set $\Reg$ however always has positive Lebesgue measure so one can restrict the Liouville measure to $\Reg$ and then renormalize it} to be a probability measure, which will be denoted by $\mu_L$.

The potential function that we study  is the \emph{geometric potential} 
 $$\varphi^u(v)=-\lim_{t\to 0}\frac{1}{t} \log \det (df^t|_{E^u_v}).$$
 This potential has been studied in the context of hyperbolic systems since the pioneering work of Bowen.  
%\marginpar{\tiny\blue Is it true that $\varphi^u$ detects curvature along the whole orbit of $v$? Reference?}
 In our setting, $\varphi^u = 0$ on $\Sing$ and $\varphi^u(v) < 0$ for any $v \in \Reg$ such that $K(\gamma_v(t)) < 0$  for some $t < 0$. In particular, $\varphi^u(v) < 0$ if $v$ is generic for any invariant probability measure $\mu$ such that $\mu(\Reg) = 1$; see Lemma \ref{l:hyp}. 
 
 We consider the one parameter family of potentials $q\varphi^u$ where $q$ ranges over $\mathbb R$.

\begin{definition}
For a potential $\varphi$, a \emph{phase transition} occurs at  $q_0$ if the map  $q\mapsto P(q\varphi)$ is not differentiable at $q_0$. 
%\footnote{Lack of differentiability implies nonuniqueness of the equilibrium states by a simple convexity argument, see \cite{Ruelle78}.}% there is more than one equilibrium state for $q_0\varphi$.
\end{definition}

Note that nondifferentiability implies nonuniqueness of the equilibrium states by a simple convexity argument; see \cite{Ruelle78}.

 If $\Sing = \emptyset$, the geodesic flow is transitive and Anosov \cite{Eberlein73}. Any multiple of the geometric potential $\varphi^u$ is  H\"older continuous, and it is a classical result of Bowen \cite{Bow75} that all H\"older continuous potentials have unique equilibrium states when the flow is transitive and Anosov. All of these equilibrium states are Bernoulli, fully supported and the weak$^*$ limit of weighted measures supported on periodic closed orbits.
So there are no phase transitions in this case and we henceforth assume that $\Sing \neq \emptyset$.  Bowen's results hold in any dimension.

%\tf{If we use this definition then we need to establish that the lack of differentiability}

%We note that in the physical literature \cite{physics-book}, phase transitions (or critical phenomena) includes additional aspects:  such as the ``failure of the Central Limit Theorem" for the equilibrium states at $q=q_0$. We refer to \cite{Sarig-Critical} for a corresponding mathematical picture in some large class of symbolic dynamical systems (Markov shifts with countable state alphabets).

If $\Sing \neq \emptyset$ and $M$ is a surface, then the geodesic flow lies on the boundary of the Anosov flows. Phase transitions {are known to} arise for {some systems similarly} on the boundary of hyperbolic systems.  For instance, the partially hyperbolic maps introduced in \cite{DHRS} that are on the boundary of uniformly hyperbolic diffeomorphisms have a phase transition as shown by Leplaideur, Oliveira, and Rios \cite{LOR} for a family of potential functions related to the {geometric potential}.  Specifically, for a partially hyperbolic diffeomorphism $F$ with center distribution $E^c$ they show that the family of potential functions $\phi_q=q\log |DF|_{E^c}|$ has a phase transition for a specific parameter $q_0$.  For certain partially hyperbolic, topologically transitive, local diffeomorphisms there are phase transitions \cite{DGR} where each ergodic measure has positive entropy.
%\tf{Other papers to cite that have previous results?}

When $\Sing \neq \emptyset$ the first major result  was by Knieper who used a combination of ideas from Patterson-Sullivan and Bowen  to prove uniqueness of the measure of maximal entropy \cite{knieper98}.
%\footnote{His result also applies to rank one manifolds of nonpositive curvature in higher dimensions.}.
Climenhaga and Thompson \cite{CT3} developed a nonuniform version of Bowen's techniques, and this was used in \cite{BCFT} to show that $q\varphi^u$ has a unique equilibrium state for any $q$ sufficiently close to 0.  As in the Anosov case, these equilibrium states are Bernoulli, fully supported and the weak$^*$ limit of weighted measures supported on periodic closed orbits. The Bernoulli property is a consequence of work of Ledrappier, Lima and Sarig \cite{LLS}.

Stronger results hold for surfaces with $\Sing \neq \emptyset$.  In this case, Theorem C of \cite{BCFT} shows there is a unique equilibrium state for $q\varphi^u$ whenever $q<1$;  these measures have all of the properties described in the previous paragraph.

The results in \cite{BCFT} also apply to some other potential functions.  Climenhaga and Thompson's techniques have also been used to show there is a unique measure of maximal entropy for geodesic flows on certain manifolds without conjugate points \cite{CKW}, and to establish the uniqueness of equilibrium states for certain diffeomorphisms with weak forms of hyperbolicity \cite{CFT2, CFT}.

The pressure of $q\varphi^u$  was also studied in \cite{BG} as part of an investigation of the Hausdorff dimensions of the level sets of the Lyapunov exponent associated to $E^u$. In that paper it was noted that there is a phase transition at $q = 1$.
The graph of $P(q\varphi^u)$ is shown in Figure~1, which is essentially the same as Figure 2a) in \cite{BG}.

\begin{figure}[htb]
\begin{center}
\includegraphics[width=.6\textwidth]{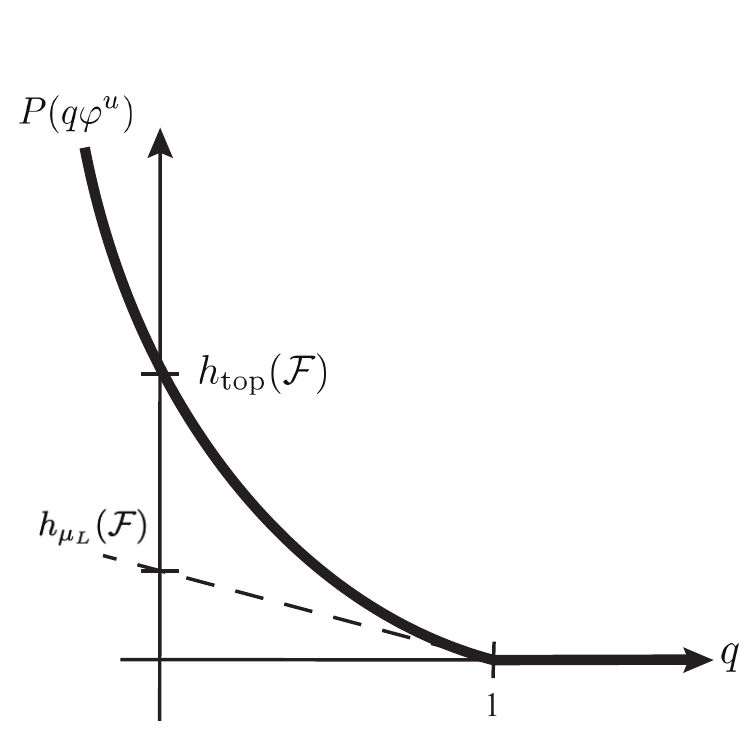}
\caption{Graph of the pressure}\label{f.graph}
\end{center}
\end{figure}
%\tf{modify figure}

%\marginpar{\blue\tiny slight reorganization of the following paragraphs until Theorem 2.2 - all arguments deferred to Section 3}
As we explain in Section 3, {the following facts follow} from the variational principle,  Ruelle's inequality and the Pesin entropy formula. First 
  \begin{equation} \label{Liouineq}
  P(q\varphi^u) \geq h_{\mu_L}(\mathcal F) + q \int \varphi^u \,d\mu_L,
  \end{equation}
and both sides are equal to $0$ when $q = 1$.

For $q>1$, we have $P(q\varphi^u) = 0$ which is trivially $C^1$ on $(1,\infty)$ and the equilibrium states for $q\varphi^u$ are the invariant probability measures supported on {the compact invariant set} $\Sing$. 

For $q<1$, Theorem C of \cite{BCFT} shows that $P(q\varphi^u)$ is $C^1$ on $(-\infty, 1)$ and that there is a unique equilibrium state $\mu_q$ for $q\varphi^u$. Moreover, this equilibrium state is ergodic since it is unique and satisifies $\mu_q(\Reg) = 1$, since the regular set has positive measure with respect to $\mu_q$ and the regular set is invariant.

For $q=1$, since the integral on the right hand side of (\ref{Liouineq}) is positive, there must be a discontinuity in the derivative at $q = 1$. Hence, there is a phase transition at $q=1$.  Both the invariant probability measures supported on $\Sing$ and $\mu_L$ are equilibrium states for $\varphi^u$.  The main result of this paper is that there are no other equilibrium states for this value $q=1$:

 \begin{theorem}\label{thm:phasetransition}
 For $\varphi^u$ the only ergodic equilibrium state not supported on $\Sing$ for the geodesic flow of a compact rank 1 surface of nonpositive curvature is $\mu_L$.
 \end{theorem}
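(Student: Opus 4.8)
The plan is to show that any ergodic equilibrium state $\mu$ for $\varphi^u$ that is not supported on $\Sing$ must be absolutely continuous with respect to the Liouville measure, and then to identify it with $\mu_L$. First I would record the consequence of the equilibrium condition. If $\mu$ is ergodic and $\mu(\Reg)>0$, then $\mu(\Reg)=1$ by invariance of $\Reg$, and by Lemma~\ref{l:hyp} the unstable exponent is strictly positive. Integrating the definition of $\varphi^u$ along the flow gives $\log\det(df^T|_{E^u})=-\int_0^T\varphi^u(f^s\,\cdot)\,ds$, so by the Birkhoff theorem $-\int\varphi^u\,d\mu=\lim_{T\to\infty}\tfrac1T\log\det(df^T|_{E^u})=\lambda^u(\mu)$. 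Hence the equilibrium identity $h_\mu(\mathcal F)+\int\varphi^u\,d\mu=P(\varphi^u)=0$ becomes $h_\mu(\mathcal F)=\lambda^u(\mu)>0$, which is precisely the case of equality in Ruelle's inequality for the time-one map $f^1$, whose only positive Lyapunov exponent is $\lambda^u$.

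Next I would upgrade this equality to absolute continuity using the Ledrappier--Young theory. On a surface the exponents of $f^1$ are $\lambda^u\ge 0$, $0$, and $\lambda^s=-\lambda^u$, and the geodesic flow is reversed by the flip $v\mapsto -v$, which interchanges $E^u$ and $E^s$; thus $h_\mu(\mathcal F)=\lambda^u(\mu)=|\lambda^s(\mu)|$ is simultaneously the equality case in Ruelle's inequality for $f^1$ and for $f^{-1}$. By the Ledrappier--Young characterization of this equality, applied to $f^1$ and to its inverse, the conditional measures of $\mu$ are absolutely continuous along the strong unstable foliation and along the strong stable foliation. Combining these two facts with the absolute continuity of the stable and unstable holonomies and with the invariance of $\mu$ in the flow direction, the local product structure yields that $\mu$ is absolutely continuous with respect to the Riemannian volume, that is, $\mu\ll\mu_L$.

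Finally I would identify $\mu$ with $\mu_L$. Since $\mu\ll\mu_L$ and both measures are $\mathcal F$-invariant, the density $d\mu/d\mu_L$ is a flow-invariant function, so by ergodicity of $\mu$ the measure $\mu$ is the normalized restriction of $\mu_L$ to a single ergodic component. It then remains to show that $\mu_L$, the normalized Liouville measure on the nonuniformly hyperbolic set $\Reg$, is itself ergodic, for then the density is constant and $\mu=\mu_L$. I expect this ergodicity step to be the main obstacle. The natural route is the Hopf argument: forward and backward Birkhoff averages of a continuous function coincide $\mu_L$-almost everywhere and are constant along stable, respectively unstable, leaves, so by absolute continuity of the two foliations these averages are constant on any set saturated by both. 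The difficulty, special to the rank one setting, is that the singular set can obstruct these leaves, so one must show that Liouville-typical regular vectors can nonetheless be joined by chains of stable and unstable manifolds remaining in $\Reg$; this is where the density and transitivity of $\Reg$, together with the two-dimensionality of $M$, must be exploited. Once ergodicity of $\mu_L$ is established, the theorem follows.
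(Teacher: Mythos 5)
Your strategy is sound, and its middle portion is genuinely different from the paper's. Both arguments begin identically: $P(\varphi^u)=0$, Lemma~\ref{l:hyp}, and the fact that $-\int\varphi^u\,d\mu$ is the unstable exponent turn the equilibrium condition into the equality case of Ruelle's inequality. From there the paper applies Ledrappier--Young only \emph{once}: $\mu$ has absolutely continuous conditionals on unstable leaves. It then runs a Hopf-type saturation argument rather than proving absolute continuity of $\mu$: pick one Pesin unstable leaf $W$ carrying a positive leaf-volume set of $\mu$-generic points, saturate by Pesin stable manifolds to get a set $G$ of \emph{forward} $\mu$-generic points, use transverse absolute continuity of the stable lamination to get $\mu_L(G)>0$, and then use ergodicity of $\mu_L$ to conclude that some point is forward generic for both measures, whence $\mu=\mu_L$. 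You instead apply Ledrappier--Young twice (to the time-one map and its inverse) to get absolutely continuous conditionals on \emph{both} laminations, assemble these into $\mu\ll\mu_L$ via local product structure, and then identify $\mu$ with an ergodic component of $\mu_L$. Your route proves a stronger intermediate statement (absolute continuity of $\mu$) at the cost of extra machinery; the paper's route never needs $\mu\ll\mu_L$ at all. Both routes, however, consume the same final ingredient: ergodicity of $\mu_L$.

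Two repairs are needed in your write-up. First, your justification of $|\lambda^s(\mu)|=\lambda^u(\mu)$ via the flip $\sigma\colon v\mapsto -v$ does not work as stated: $\sigma$ conjugates $f^t$ to $f^{-t}$ and carries $E^u$ to $E^s$, but it transports $\mu$ to the generally different measure $\sigma_*\mu$, so it only identifies the exponents of $\sigma_*\mu$, not a symmetry of the exponents of $\mu$ itself. The correct reason, recorded in the paper, is that the flow preserves the contact form, so $d\alpha$ restricted to $E^s\oplus E^u$ is an invariant area form and $\chi^s=-\chi^u$ almost everywhere for \emph{every} invariant measure. Second, the ergodicity of $\mu_L$, which you flag as ``the main obstacle,'' is not something this proof must establish: the paper invokes it at the same point as a known fact. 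In this geometric setting it follows from Pesin theory together with topological transitivity of the rank one geodesic flow: the bundles $E^s$ and $E^u$ are continuous on all of $T^1M$ and transverse exactly on the open set $\Reg$, so the local Hopf argument (one-dimensional stable and unstable leaves, absolutely continuous holonomies) shows the ergodic components of $\mu_L$ are open mod $0$ in $\Reg$; two distinct components would then give disjoint nonempty open invariant sets, contradicting the existence of a dense orbit. This also dissolves your specific worry about the singular set obstructing chains of leaves: one needs only local ergodicity inside $\Reg$ plus transitivity on $T^1M$, not global chains of stable and unstable manifolds avoiding $\Sing$. With these two repairs, your argument is complete and correct.
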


%{This result does not extend to higher dimension. Indeed,}
%in dimension~3 there is a rank~1 manifold with nonpositive sectional curvature and nonempty singular set where there is a unique equilibrium state supported on $\Reg$ for $q\varphi^u$ for all $q\in\mathbb{R}$; see Section 10.2 of \cite{BCFT}.  

%\marginpar{\tiny\blue Does that Theorem D allow equilibrium states on $\Sing$, i.e., phase transitions, for small $q$?}

We note that little is known about phase transitions for the geodesic flow of compact rank one manifolds with nonpositive sectional curvature  in higher dimension. The Heintze example \cite[Example 4]{BBE} shows that at least two different  behaviours are possible. Heintze's example is constructed by taking two copies of a compact hyperbolic manifold of dimension $n \geq 3$ with finite volume and one cusp, cutting off the ends of the cusps, and gluing the two halves together. The resulting manifold has nonpositive curvature and  contains a flat totally geodesically isometrically embedded torus $T$ of dimension $n-1$.  Moreover, it can be arranged that each plane spanned by a  vector orthogonal to $T$ and a vector tangent to $T$ has sectional curvature zero or that all such planes have negative curvature.
%\TF{For each $p\in T$ we look at a plane $P$ in the tangent space of $p$ where $P$ is spanned by a vector normal to the $T$ and a vector tangent to $T$.}
%The construction can be made so that \TF{each plane has} \deleted{planes orthogonal to $T$ have} zero curvature or so that they \TF{each} has negative curvature. 
In the first case, we see the same phase transition at $q =1$ as in the surface case. In the second case, as explained in \cite[Section 10.2]{BCFT}, it can be arranged that   $q\varphi^u$ has a unique equilibrium state for each $q\in \mathbb{R}$ and there are no phase transitions.  On the other hand, Theorem D of \cite{BCFT} proves that the family $q\varphi^u$ does not have a phase transition at $q = 0$ for the geodesic flow of any compact rank 1 manifold with nonpositive sectional curvature.

%\TF{We note that little is known about phase transitions for the geodesic flow of compact rank 1 manifolds with nonpositive sectional curvature  in higher dimension.  The example in \cite[Section 10.2]{BCFT} shows $q\varphi^u$ has a unique equilibrium state for each $q\in \mathbb{R}$, and so there are no phase transitions for this system.  Theorem D of \cite{BCFT} proves that there is no phase transition at $q=0$ for $q\varphi^u$ and any geodesic flow of a compact rank 1 manifold with nonpositive sectional curvature.}

%for a compact rank 1 manifold with nonpositive sectional curvature there exists a $q_0>0$ such that $q\varphi^u$ has a unique equilibrium state supported on $\Reg$ for all $q\in (-q_0, q_0)$, but no lower bound for  $q_0$ is known (other than $0$).  This leads us to the following open questions.

%\marginpar{\tiny\blue do we ask about phase transition, ie, no equilibrium even in $\Sing$?}

\begin{question}\label{q:interval}
For any compact rank one manifold with nonpositive sectional curvature is there a unique equilibrium state associated with $q\varphi^u$ for each $0<q<1$?  What about for $q<0$?
\end{question}

%\tf{Should we mention that it seems likely the answer to the first question is negative?}

\begin{question}\label{q:phasetransition}
For a compact rank one manifold with nonpositive sectional curvature, what conditions guarantee that a phase transition occurs at some $q'\in\mathbb{R}$ for the  one parameter family of potential functions $q\varphi^u$?
\end{question}

\section{Jacobi fields and the derivative of the geodesic flow}

We outline some facts about the geodesic flow for manifolds with nonpositive curvature. A convenient source for the details is  \cite{Ebe:96}. 
%\tf{This the same reference as Ebe01 in BCFT.}
If $M$ is a Riemannian manifold and $\pi:TM \to M$ is the natural projection, we have for each $v \in TM$ a natural isomorphism $I_v: T_vTM \to T_{\pi(v)}M \times T_{\pi(v)}M$ defined as follows. Suppose $V(t)$ is a $C^1$ curve in $TM$
with footpoint curve $c(t)$ and $V'(t)$ is the covariant derivative of $V$ along $c$.
Then
  $$
  I_{V(0)}(\dot V(0)) = (\dot c(0), V'(0)).
  $$
We have $I_v(T_vT^1M) = T_{\pi(v)}M \times v^\perp$, where $v^\perp$ is the orthogonal complement of $v$ in~$T_{\pi(v)}M$. The vector field that generates the geodesic flow has value $I_v^{-1}(v,0)$ at~$v$. If $\xi = I_v^{-1}(w,w') \in T_vTM$, and $J_\xi$ is the Jacobi field along the geodesic $\gamma_v$ with $J(0) = w$ and $J'(0) = w'$, then 
 $$
 I_{F^t(v)}(DF^t(\xi)) = (J_\xi(t),J'_\xi(t))
 $$ for all $t$. The \emph{Sasaki metric} is defined on $TTM$ by
  $$
   \langle \xi, \xi\rangle = \langle w, w\rangle + \langle w', w'\rangle
    \quad\text{ if $\xi = I_v(w,w')$.}
  $$

A Jacobi field $J(t)$ along the  geodesic $\gamma_v$ is \emph{stable} (resp.\ \emph{unstable}) if $J(t) \perp \dot\gamma(t)$ for all $t$ and $\|J(t)\|$ is nonincreasing (resp.\ nondecreasing) for all $t$. A stable or unstable Jacobi field can vanish only if it is identically zero. 
If $w$ is a vector perpendicular to $v$, there are a unique stable Jacobi field and a unique unstable Jacobi field along $\gamma_v$ with initial value $w$. 
%\jb{doesn't this need $v$ to be regular?}
%\tf{this doesn't require it to be regular.}
The spaces of stable and unstable Jacobi fields along $\gamma_v$ are therefore both one dimensional. They also vary continuously. 
Consequently there are continuous functions $a^s,a^u: TT^1M\to \mathbb R$ such that if $w \in v^\perp$ then the stable (resp.\ unstable) Jacobi field along $\gamma_v$ with initial value $w$ has derivative $a^s(v)w$ (resp.\ $a^u(v)w$) at time $0$. It is clear that  $a^u \geq 0 \geq a^s$.

We set
  $$
  E^s(v) = \{\xi \in T_vT^1M \text{ such that $J_\xi$ is stable}\}
  $$
and 
$$
  E^u(v) = \{\xi \in T_vT^1M \text{ such that $J_\xi$ is unstable}\}.
  $$

It follows from the  Jacobi equation that $\langle J,J \rangle$ is a convex function for any Jacobi field $J$ along any geodesic $\gamma$. Moreover $\langle J,J\rangle''(t) > 0$ unless $J(t) = 0$ or  $K(\gamma(t)) = 0$.  It follows that stable and unstable Jacobi fields must have constant length and be covariantly constant along singular geodesics because the curvature is always zero. If $J$ is a nonzero unstable (resp.\ stable) Jacobi field and $K(\gamma(t_0)) < 0$, then $\|J\|'(t) > 0$ for all $t > t_0$ (resp.\ 
$\|J\|'(t) < 0$ for all $t < t_0$). We see that  $\|J\|$ cannot be constant if $J$ is a stable or unstable Jacobi field along a regular geodesic. It is also clear that
$a^u(v) = 0 = a^s(v)$ if $v$ is singular and $a^u(v) > 0 > a^s(v)$ if the curvature is negative at $\pi(v)$.

  The functions $a^s$ and $a^u$ are bounded because they are continuous. It follows easily that if $\xi \in E^s(v)$ or $\xi \in E^u(v)$ and $\lim_{|t| \to \infty}\frac1t\log\|Df^t(\xi)\|$ exists, then it is the same as $\lim_{|t| \to \infty}\frac1t\log\|J_\xi(t)\|$. From this it is easy to see that all three Lyapunov exponents for the geodesic flow $\mathcal F$ are $0$ on the singular set, while on the regular set we have the zero exponent associated to the flow direction and the exponents
$\chi^u$ and $\chi^s$ associated to the bundles $E^u$ and $E^s$. It is obvious from the above discussion that
if $v$ is regular and has all three Lyapunov exponents defined, then  $\chi^u(v) \geq 0 \geq \chi^s(v)$.

\begin{lemma}\label {l:hyp}
Let $\mu$ be an ergodic probability measure such that $\mu(\Reg) = 1$. Then
$\chi^u(v) > 0 > \chi^s(v)$ for $\mu$-a.e.~$v$.
\end{lemma}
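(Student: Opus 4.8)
The plan is to reduce the two strict inequalities to the positivity of integrals of the infinitesimal rate functions $a^u$ and $a^s$, and then to exploit the hypothesis $\mu(\Reg)=1$ together with invariance to locate positive $\mu$-mass on the negatively curved region. Concretely, I would first show that for $\mu$-a.e.\ $v$ the Oseledets exponents are given by
$$
\chi^u(v)=\int a^u\,d\mu \qquad\text{and}\qquad \chi^s(v)=\int a^s\,d\mu .
$$
Since $a^u\ge 0\ge a^s$ everywhere, it then suffices to prove that neither integrand vanishes $\mu$-almost everywhere, i.e.\ that $\mu(\{a^u>0\})>0$ and $\mu(\{a^s<0\})>0$.

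To establish the integral formulas, fix a regular $v$ and let $J$ be a nonzero unstable Jacobi field along $\gamma_v$. Because the restriction of an unstable field to any subinterval is again the unstable field along the corresponding orbit point, the defining relation for $a^u$ gives $J'(t)=a^u(f^t v)\,J(t)$ for all $t$; as the normal bundle along $\gamma_v$ is one dimensional this is a scalar ODE whose solution satisfies
$$
\log\|J(t)\|-\log\|J(0)\|=\int_0^t a^u(f^s v)\,ds .
$$
By the discussion preceding the lemma the growth rate of $\|J(t)\|$ coincides with that of $\|Df^t\xi\|$ for $\xi\in E^u(v)$, so dividing by $t$, letting $t\to\infty$, and applying the Birkhoff ergodic theorem (valid since $a^u$ is continuous, hence $\mu$-integrable, and $\mu$ is ergodic) yields $\chi^u(v)=\int a^u\,d\mu$ for $\mu$-a.e.\ $v$. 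The formula for $\chi^s$ is identical.

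It remains to see $\int a^u\,d\mu>0$; the argument for $\int a^s\,d\mu<0$ is symmetric. Set $N=\{v: K(\pi(v))<0\}$, which is open and nonempty since the curvature is somewhere negative, and recall that $a^u(v)>0$ whenever $v\in N$. Thus it is enough to show $\mu(N)>0$. Suppose instead that $\mu(N)=0$, so that the closed set $A=\{v:K(\pi(v))=0\}$ has full measure. By invariance $\mu(f^{-t}A)=\mu(A)=1$ for every $t$, hence $\mu\big(\bigcap_{t\in\mathbb{Q}}f^{-t}A\big)=1$; and by continuity of $t\mapsto K(\gamma_v(t))$ a vector lying in this countable intersection has $K(\gamma_v(t))=0$ for all real $t$, i.e.\ belongs to $\Sing$. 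This forces $\mu(\Sing)=1$, contradicting $\mu(\Reg)=1$. Hence $\mu(N)>0$ and $\int a^u\,d\mu\ge\int_N a^u\,d\mu>0$, completing the proof. I expect the main obstacle to be the first step---correctly identifying the exponent with the time average of $a^u$---since it requires matching the Jacobi-field growth with the genuine derivative cocycle $Df^t|_{E^u}$; once the integral formula is in hand, the positivity follows from the soft recurrence argument just described.
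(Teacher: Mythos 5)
Your proof is correct, and its skeleton matches the paper's: both identify $\chi^u$ $\mu$-a.e.\ with the space average $\int a^u\,d\mu$ (via the relation $J'(t)=a^u(f^tv)\,J(t)$ and the Birkhoff ergodic theorem) and then prove positivity of that integral by a contradiction argument combining invariance of $\mu$ with continuity. Where you genuinely diverge is in which contradiction you run. The paper stays with the function $a^u$ itself: if $\int a^u\,d\mu=0$ then, since $a^u\ge 0$, $\mu$-a.e.\ orbit has $a^u=0$ at almost every time, hence by continuity at every time, so unstable Jacobi fields along such geodesics have constant length, which is impossible along a regular geodesic. You instead drop down to the curvature: if the open set $N=\{v:\, K(\pi(v))<0\}$ were $\mu$-null, then invariance over rational times and continuity of $t\mapsto K(\gamma_v(t))$ would force $\mu$-a.e.\ $v$ into $\Sing$ (using the characterization that $v$ is singular if and only if $K(\gamma_v(t))=0$ for all $t$), contradicting $\mu(\Reg)=1$; then $\int a^u\,d\mu\ge\int_N a^u\,d\mu>0$ because $a^u>0$ on $N$. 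The two routes lean on different facts from Section 3: the paper needs that a stable or unstable Jacobi field cannot have constant length along a regular geodesic, while you need the implication $K(\pi(v))<0\Rightarrow a^u(v)>0$ together with the curvature characterization of $\Sing$. Yours has the small advantage of contradicting the hypothesis $\mu(\Reg)=1$ directly, and it makes the integral formula for the exponent explicit where the paper leaves it implicit ("it is immediate from this that unstable Jacobi fields grow exponentially"); the paper's version is marginally shorter because it never leaves $a^u$. Both are complete proofs.
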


\begin{proof} We must have 
  $$\int a^u(v)\,d\mu(v) > 0 .
 $$
Otherwise, since $a^u \geq 0$, we would know that $\mu$-a.e.~$v \in \Reg$ has $a^u(f^t(v))=0$ for almost all $t$. Since $a^u$ is continuous, this would mean that  for any such $v$ all unstable Jacobi fields along $\gamma_v$ would have constant length, which is impossible if $v \in \Reg$.

We now know that if $v$ is a generic point for $\mu$, the Birkhoff average of $a^u$ is positive at $v$. It is immediate from this that unstable Jacobi fields along $\gamma_v$ grow exponentially and therefore $\chi^u(v) > 0$.

A  similar argument applies to $\chi^s$.
\end{proof}

It is clear from their definitions that the exponent $\chi^u$ is the Birkhoff average of the function $-\varphi^u$. Finally, even though we do not use this fact, it is worth noting that  $\chi^s = -\chi^u$ because the flow $\mathcal F$ preserves the contact form on $TT^1M$.

\section{Proof of Theorem \ref{thm:phasetransition}}

The probability measures that are ergodic for $\mathcal F$ can be divided into two classes: those like $\mu_L$ for which $\Reg$ has measure $1$ and $\Sing$ has measure $0$ and those for which $\Sing$ has measure $1$ and $\Reg$ has measure $0$. The latter class is also nonempty because $\Sing$ is compact and invariant.

From Ruelle's inequality \cite{Ruelle78} and the fact that $\chi^u$ is the only Lyapunov exponent that can be positive,  we know that if $\mu$ is a probability measure that is ergodic for $\mathcal F$, then
$$
h_\mu(\mathcal{F})\leq \int \chi^u(v)\,d\mu(v).
$$
Pesin's entropy formula \cite{Pesin77} tells us that we have equality if $\mu = \mu_L$. We also have equality if $\mu$ is supported on $\Sing$, since the right hand side is $0$ in that case.  So  for any measure $\mu$ supported on the singular set we have
\begin{itemize}
\item $h_\mu(\mathcal{F})=0$,  
\item $\int \varphi^u(v)d\mu(v)=0$, and  
\item $h_\mu(\mathcal{F})-\int q\varphi^u(v)d\mu(v)=0$ for any $q\in \mathbb{R}$.
\end{itemize}   
Since $\chi^u$ is the Birkhoff average of the function $-\varphi^u$, we have

  \begin{equation}\label{atq=1}
  h_\mu(\mathcal{F}) +  \int \varphi^u(v)\,d\mu(v) \leq 0,
  \end{equation}
  with equality if $\mu = \mu_L$ or $\mu$ is supported on the singular set. This tells us that $\mu_L$ and measures supported on $\Sing$ are equilibrium states for $q\varphi^u$ when $q = 1$.

The integral in (\ref{atq=1}) is negative if $\mu(\Reg) =1$ and $0$ if if $\mu(\Sing) =1$.  If $q > 1$, we have
 \begin{equation*}\label{whenq>1}
  h_\mu(\mathcal{F}) +  \int q\varphi^u(v)\,d\mu(v) \leq 0,
  \end{equation*}
  with equality if and only if $\mu$ is supported on the singular set. This shows that when $q > 1$, the equilibrium states for $q\varphi^u$ are precisely the measures supported on the singular set.

If $\mu = \mu_L$, inequality (\ref{atq=1}) is an equality and the integral on the left hand side  is negative. Hence
 \begin{equation*}\label{whenq<1}\begin{aligned}
  P(q\varphi^u) &\geq h_{\mu_L}(\mathcal{F}) +  \int q\varphi^u(v)\,d\mu_L(v) \\
  &= h_{\mu_L}(\mathcal{F}) +  \int \varphi^u(v)\,d\mu_L(v) - (1-q)\int \varphi^u(v)\,d\mu_L(v) > 0,
  \end{aligned}\end{equation*}
when $ q < 1$. If follows that the pressure $P(q\varphi^u)$ is positive and therefore any}equilibrium state for $q\varphi^u$ must give full measure to the regular set when $q < 1$.

All that remains is to verify that $\mu_L$ is the only equilibrium state for $\varphi^u$ that gives full measure to the regular set.

\begin{proof}[Proof of Theorem \ref{thm:phasetransition}]

Let $\mu$ be an ergodic equilibrium state for $\varphi^u$ supported on $\Reg$. 
Lemma~\ref{l:hyp} tells us that $\mu$ is hyperbolic. Since $P(\varphi^u)=0$ and $\chi^u$ is the only exponent that can be positive, Ruelle's inequality  becomes  the equality
  $$
  h_\mu(\mathcal F) = -\int_{\Reg} \varphi^u(v) \,d\mu(v).
  $$ 
 Ledrappier and Young prove in Theorem A of \cite{LY1} that equality holds in the Ruelle inequality if and only if $\mu$ has absolutely continuous conditional measures on the  Pesin unstable manifolds $W^u(v)$, $v \in T^1M$. 
We can find a set $\Lambda$ with $\mu(\Lambda) > 0$ such that if $v \in \Lambda$, then $v$ is generic for $\mu$ and the Pesin stable and unstable manifolds for $v$ have radius at least $\epsilon$ and depend continuously on $v$. There must be a Pesin unstable manifold $W$ such that $\lambda_W(\Lambda \cap W) > 0$, where $\lambda_W$ is the volume measure on $W$ induced by the Sasaki metric on $T^1M$.  The set 
  $$
  G = \bigcup_{v \in \Lambda \cap W} W^s(v)
  $$
  consists of forward generic points for $\mu$. On the other hand, the transverse absolute continuity of the Pesin stable manifolds tells us that $\mu_L(G) > 0$. Since $\mu_L$ is ergodic, $\mu_L$-a.e.~$v \in G$ is forward generic for $\mu_L$ and therefore  forward generic for both $\mu_L $ and $\mu$. Consequently $\mu = \mu_L$, since all continuous functions must have the same integrals with respect to both measures.
\end{proof}

\subsection*{Acknowledgments} This work was carried out in a workshop at the American Institute of Mathematics. We thank AIM for their support and hospitality. 

\bibliography{bcft-references}{}
\bibliographystyle{plain}
\end{document}